\newtheorem{thm}{Theorem}[section]
\newtheorem{remark}{Remark}[section]
\def\d{\partial}
\newcommand{\with}{\quad\hbox{with}\quad}
\newcommand{\andf}{\quad\hbox{and}\quad}
\def\wt{\widetilde}
\renewcommand\H{\mathbb{H}}
\newcommand\R{\mathbb{R}}
\renewcommand\L{\mathbb{L}}
\newcommand{\N}{\mathbb{N}}
\renewcommand{\div}{\mbox{\rm div}\;\!}
\def\cH{{\mathcal H}}
\def\cL{{\mathcal L}}
\def\cR{{\mathcal R}}
\def\d{\partial}
\begin{document}
\title[Decay for NS2D]{On the decay and Gevrey regularity of the solutions to the Navier-Stokes equations in general two-dimensional domains}
%\date\today

\subjclass[2020]{35Q35; 76N10}
\keywords{Hyperbolic systems, critical regularity, relaxation limit, partially dissipative}

\author[R. Danchin]{Rapha\"el Danchin}
\address[R. Danchin]{Univ Paris Est Creteil, Univ Gustave Eiffel, CNRS, LAMA UMR8050, F-94010 Creteil, France
and Sorbonne Universit\'e, LJLL UMR 7598, 4 Place Jussieu, 75005 Paris}
\email{danchin@u-pec.fr}

\maketitle

%\selectlanguage{english}

\begin{small}\begin{center}\textbf{Abstract}\end{center}
 The present paper is devoted to the proof of time decay estimates for
derivatives at any order of finite energy global solutions of the Navier-Stokes equations in general two-dimensional domains.
These estimates only depend on the order of derivation and on the $L^2$ norm of the initial data.
The same elementary method just based on energy estimates and Ladyzhenskaya inequality also leads
to Gevrey regularity results.
\bigbreak

\begin{center}\textbf{R\'esum\'e}\end{center}
On s'int\'eresse aux  propri\'et\'es  de décroissance temporelle pour les dérivées des solutions globales à énergie finie des équations de Navier-Stokes dans des domaines généraux bidimensionnels.
Les estimations obtenues  ne dépendent que de l'ordre de dérivation et de la norme $L^2$ des données initiales.
La même méthode élémentaire basée sur les bornes d'énergie et l'inégalité de Ladyzhenskaya conduit également à des résultats de régularité Gevrey.
\bigbreak\noindent\textbf{Keywords:}  Incompressible Navier-Stokes equations, two-dimensional, decay estimates, Gevrey regularity.
\end{small}
\vspace{1cm}

We are concerned with the  
incompressible Navier-Stokes equations that govern the evolution of the velocity field $u=u(t,x)$ and 
pressure function $P=P(t,x)$ of homogeneous incompressible viscous flows
in a general domain $\Omega$ of $\R^2$ or in a two-dimensional periodic box. 
 Adopting standard notation these equations read
 \begin{equation*}
\left\{\begin{aligned}
 &u_{t}+\div(u\otimes u)-\Delta u+\nabla P=0&&\quad\hbox{in }\ \R_+\times\Omega,  \\
&\div u=0&&\quad\hbox{in }\ \R_+\times\Omega,\\
&u|_{t=0}=u_0&&\quad\hbox{in }\Omega.
\end{aligned}\right.\leqno(NS)\end{equation*}
The initial data $u_0$ is a given divergence free vector-field with 
 normal component vanishing at the boundary $\d\Omega$ of $\Omega$ and 
we supplement (NS) with homogeneous Dirichlet boundary conditions for $u$ at $\d\Omega.$ 
\smallbreak 
The global existence theory  for (NS) originates from  the 
paper \cite{Leray2} by J. Leray in 1934.  In the case $\Omega=\R^3,$ by combining   the   energy balance associated to (NS):
\begin{equation}\label{eq:L2NS}
\frac12\|u(t)\|_{L^2}^2+\int_0^t\|\nabla u\|_{L^2}^2\,d\tau =  \frac12 \|u_0\|_{L^2}^2,\qquad t\in\R_+,
\end{equation}
with compactness arguments, he succeeded in  constructing  for any  divergence free 
$u_0$ in $L^2(\R^3;\R^3)$    a global  distributional solution of (NS) 
satisfying \eqref{eq:L2NS} \emph{with an inequality} (viz. the left-hand side is bounded by the right-hand side). 

Leray's result turns out to be very robust and can be adapted to any two or three-dimensional domain: we have
the following statement that is proved in e.g. \cite{CDGG}:
\begin{thm}\label{thm:0} Let $\Omega$ be  a domain of $\R^d$ (with $d=2,3$) 
and denote by $L^2_\sigma(\Omega)$  the completion of the set ${\mathcal V}_\sigma$ of smooth divergence free vector-fields
compactly supported in $\Omega$ for the $L^2(\R^d;\R^d)$ norm.  Let 
$H^1_{0,\sigma}(\Omega)$ be the completion of ${\mathcal V}_\sigma$  for the $H^1(\R^d;\R^d)$ norm.

Then, for any $u_0\in L^2_\sigma(\Omega)$ there exists a global distributional solution $(u,P)$ of (NS) 
with $u\in L^\infty(\R_+;L^2_\sigma(\Omega))\cap L^2_{loc}(\R_+;H^1_{0,\sigma}(\Omega))$
satisfying 
\begin{equation}\label{ineq:L2NS}
\frac12\|u(t)\|_{L^2}^2+\int_0^t\|\nabla u\|_{L^2}^2\,d\tau \leq  \frac12 \|u_0\|_{L^2}^2,\qquad t\in\R_+.
\end{equation}
\end{thm}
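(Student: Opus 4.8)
The plan is to reconstruct the classical Leray--Hopf construction through a Galerkin scheme, taking care of the fact that $\Omega$ may be unbounded. I would first fix a sequence $(w_k)_{k\ge1}$ in $\mathcal V_\sigma$ that is total both in $L^2_\sigma(\Omega)$ and in $V:=H^1_{0,\sigma}(\Omega)$, and such that the $L^2$-orthogonal projections $\mathbb{P}_n$ onto $V_n:=\mathrm{span}(w_1,\dots,w_n)$ are bounded on $V$ uniformly in $n$ (e.g.\ eigenfunctions of the Stokes operator, when $\Omega$ has compact Stokes resolvent). One then seeks an approximate solution $u_n(t)=\sum_{k=1}^{n}g_{n,k}(t)\,w_k$ of the finite-dimensional system
\begin{equation*}
\langle \d_t u_n,w_k\rangle+\langle \div(u_n\otimes u_n),w_k\rangle+\langle\nabla u_n,\nabla w_k\rangle=0,\qquad 1\le k\le n,\qquad u_n(0)=\mathbb{P}_n u_0 .
\end{equation*}
Since the nonlinearity is quadratic in the coefficients $(g_{n,k})$ and the Gram matrix of $(w_1,\dots,w_n)$ is invertible, Cauchy--Lipschitz gives a unique maximal solution on some interval $[0,T_n)$. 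Testing this system against $u_n(t)$ itself and using that $u_n\in\mathcal V_\sigma$ is divergence free and compactly supported, so that $\langle\div(u_n\otimes u_n),u_n\rangle=0$, one obtains $\frac12\frac{d}{dt}\|u_n\|_{L^2}^2+\|\nabla u_n\|_{L^2}^2=0$, hence the energy \emph{equality}
\begin{equation*}
\frac12\|u_n(t)\|_{L^2}^2+\int_0^t\|\nabla u_n\|_{L^2}^2\,d\tau=\frac12\|\mathbb{P}_n u_0\|_{L^2}^2\le\frac12\|u_0\|_{L^2}^2 .
\end{equation*}
In particular $T_n=+\infty$; $(u_n)_n$ is bounded uniformly in $n$ in $L^\infty(\R_+;L^2_\sigma(\Omega))$ and $(\nabla u_n)_n$ in $L^2(\R_+;L^2(\Omega))$, hence $(u_n)_n$ is bounded in $L^2_{loc}(\R_+;V)$. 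By Ladyzhenskaya's inequality if $d=2$ (resp.\ the analogous Gagliardo--Nirenberg inequality if $d=3$) these bounds give that $(u_n)_n$ is bounded in $L^4_{loc}(\R_+;L^4(\Omega))$ (resp.\ $L^{8/3}_{loc}(\R_+;L^4(\Omega))$), so $(u_n\otimes u_n)_n$ is bounded in $L^p_{loc}(\R_+;L^2(\Omega))$ with $p=2$ if $d=2$ and $p=4/3$ if $d=3$; writing $\d_t u_n=\mathbb{P}_n\bigl(\Delta u_n-\div(u_n\otimes u_n)\bigr)$ and using that $\mathbb{P}_n$ is uniformly bounded on $V$, hence on $V'$, one finds $(\d_t u_n)_n$ bounded in $L^p_{loc}(\R_+;V')$.

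The next step is the passage to the limit. Up to a subsequence, $u_n\rightharpoonup u$ weakly-$*$ in $L^\infty(\R_+;L^2_\sigma)$, weakly in $L^2_{loc}(\R_+;V)$, with $\d_t u_n\rightharpoonup\d_t u$ weakly in $L^p_{loc}(\R_+;V')$; in particular $u$ already has the announced regularity. For every bounded open set $\omega$ with $\overline{\omega}\subset\Omega$, the Aubin--Lions--Simon lemma (applied, after localization, with the compact embedding of $H^1(\omega)$ into $L^2(\omega)$) gives strong convergence $u_n\to u$ in $L^2_{loc}(\R_+;L^2(\omega))$; exhausting $\Omega$ by such sets and a diagonal argument yield $u_n\to u$ in $L^2_{loc}(\R_+\times\Omega)$ and a.e., and, combined with the uniform $L^4_{loc}$ bound, $u_n\otimes u_n\to u\otimes u$ in $L^1_{loc}(\R_+\times\Omega)$. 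One may then pass to the limit in the Galerkin relation tested against a fixed $w_k$ (linear terms by weak convergence, the quadratic one by the previous sentence), and by totality of $(w_k)$ in $\mathcal V_\sigma$ obtain
\begin{equation*}
\langle\d_t u,\phi\rangle+\langle\div(u\otimes u),\phi\rangle+\langle\nabla u,\nabla\phi\rangle=0\qquad\text{for all }\phi\in\mathcal V_\sigma .
\end{equation*}
Thus the distribution $\d_t u+\div(u\otimes u)-\Delta u$ vanishes on divergence free test fields, so by de Rham's theorem there is a distribution $P$ with $\nabla P=-\bigl(\d_t u+\div(u\otimes u)-\Delta u\bigr)$; that is, $(u,P)$ solves (NS) in $\mathcal D'(\R_+\times\Omega)$.

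There remains the energy inequality \eqref{ineq:L2NS} and the initial condition. Since $u\in L^\infty(\R_+;L^2_\sigma)$ with $\d_t u\in L^p_{loc}(\R_+;V')$, a standard argument gives $u\in C_w([0,\infty);L^2_\sigma)$ after modification on a null set; moreover the strong $L^2_{loc}(\R_+\times\Omega)$ convergence lets one extract a further subsequence with $u_n(t)\rightharpoonup u(t)$ weakly in $L^2$ for a.e.\ $t$. Weak lower semicontinuity of $\|\cdot\|_{L^2}$ and of $v\mapsto\int_0^t\|\nabla v\|_{L^2}^2\,d\tau$, together with $\|\mathbb{P}_n u_0\|_{L^2}\le\|u_0\|_{L^2}$, then give for a.e.\ $t$
\begin{equation*}
\frac12\|u(t)\|_{L^2}^2+\int_0^t\|\nabla u\|_{L^2}^2\,d\tau\le\liminf_{n\to\infty}\Bigl(\frac12\|u_n(t)\|_{L^2}^2+\int_0^t\|\nabla u_n\|_{L^2}^2\,d\tau\Bigr)\le\frac12\|u_0\|_{L^2}^2 ,
\end{equation*}
and weak continuity in time propagates the inequality to every $t\ge0$; finally $\mathbb{P}_n u_0\to u_0$ in $L^2$ forces $u(0)=u_0$. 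The step I expect to be the main obstacle is the compactness on the \emph{unbounded} domain, where Rellich's theorem fails globally: one has to localize, to choose the Galerkin basis so that the projections $\mathbb{P}_n$ stay uniformly bounded on $V$, and to check that the diagonal extraction is compatible with the weak limits already fixed. The remaining points — invertibility of the Gram matrix, the interpolation inequalities behind the $\d_t u_n$ bound, and the de Rham argument — are routine.
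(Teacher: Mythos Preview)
The paper does not actually prove this theorem: it is quoted as background and simply attributed to the reference \cite{CDGG} (Chemin--Desjardins--Gallagher--Grenier), so there is no ``paper's own proof'' to compare against. Your outline is the standard Leray--Hopf construction via Galerkin approximation, energy estimate, Aubin--Lions compactness and de Rham recovery of the pressure, which is exactly what one finds in \cite{CDGG} and other textbooks.

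One technical point worth flagging: the hypothesis that the $L^2$-orthogonal projections $\mathbb P_n$ are uniformly bounded on $V=H^1_{0,\sigma}(\Omega)$ is not free for an \emph{arbitrary} domain~$\Omega$; the Stokes eigenfunction basis you allude to need not exist without some regularity of $\partial\Omega$ or compactness of the resolvent. In the generality of the statement one usually takes any Gram--Schmidt basis of $\mathcal V_\sigma$ and instead bounds $\partial_t u_n$ in a weaker space (e.g.\ the dual of $H^m_{0,\sigma}$ with $m>d/2$, where the nonlinear term is controlled directly by the energy bounds), which is enough for Aubin--Lions. You already identified this as the sensitive step; the adjustment is routine and the rest of your argument goes through unchanged.
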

So far,  uniqueness of Leray's solutions in dimension three is an open question. 
In contrast, it holds true in dimension two (see the works by O.A. Ladyzhenskaya in \cite{Lady}, and by J.-L. Lions and G. Prodi  in \cite{LP}). 
  The key to the proof was the following \emph{Ladyzhenskaya  inequality}  
 \begin{equation}\label{eq:lad}\|z\|_{L^4}^2\leq C_0\|z\|_{L^2}\|\nabla z\|_{L^2},\qquad z\in H^1_0(\Omega)\end{equation}
 that will also play  a decisive role in the present paper. 
 \medbreak
 Since the pioneering work by J. Leray, a huge amount of literature has been devoted to the study of (NS) both in two and
 three dimensional domains.  Our goal here is to derive $L^2$ decay estimates for time derivatives at any order of two-dimensional finite energy global solutions. We shall see that our method actually gives for free  Gevrey regularity 
 for short time (or all time if the data are small). 
 \medbreak 
 Exhibiting time decay estimates for smooth and small solutions of (NS) goes back 
 to the papers by S. Kawashima, A. Matsumura and T. Nishida \cite{KMT} and J.G. Heywood \cite{H}
 devoted to the case $\Omega=\R^3$.  
 In both papers, in addition to be smooth enough, the initial velocity is required to be globally integrable on $\R^3.$
 An important breakthrough has been made by M.  Schonbek \cite{Sch1} in 1985  who observed that
 \emph{any} weak solution supplemented with an initial velocity  $u_0$ in $L^1(\R^3)\cap L^2(\R^3)$ 
satisfies time decay estimates. More accurate decay rates have been obtained shortly after 
by R. Kajikiya and T. Miyakawa \cite{KM} and M. Wiegner \cite{W}.
In \cite{Sch2},  M.  Schonbek pointed out that one  cannot expect  any generic  
 rate of decay for $\|u(t)\|_{L^2}$ if the initial data is only in $L^2.$
 \smallbreak
 It is also worth mentioning works pointing out the Gevrey or even analyticity of the solutions to (NS). 
For exemple, for periodic boundary conditions, C. Foias and R. Temam proved in \cite{FT} that $H^1$ data
give rise to  solutions with analytic regularity in time,  globally in time in dimension two, and locally in time 
in dimension three. This result has been adapted to the whole space setting and considerably refined
by P.-G. Lemari\'e-Rieusset \cite{Lem} then   by M. Oliver and E. Titi in \cite{OT}, and translated in the language
of critical Besov spaces ($u_0\in \dot B^{-1+3/p}_{p,q}(\R^3)$ with $1\leq p<\infty$ and $1\leq q\leq\infty$) 
by H. Bae, A. Biswas and E. Tadmor in \cite{BBT}.  By a different approach, J.-Y. Chemin in \cite{Chemin}
obtained (space) analyticity estimates of $L^2$ type in the case of small data (see also \cite{CGZ}). 
The more complicated case of the Navier-Stokes equations with potential forces has  been investigated
by several authors. The reader may in particular refer to the survey paper by C. Foias, L. Huan and J.-C. Saut \cite{FHS}
where asymptotic expansions for large time are presented.  
\smallbreak
Most of the aforementioned worked dedicated to decay estimates strongly rely on Fourier or spectral analysis.
In particular,  the \emph{Fourier splitting method}  of M. Schonbek  \cite{Sch1} can hardly be adapted
to  general domains (or at the price of complicated arguments that require the domain to be smooth, see
 \cite{BM}). 
Here we shall see that using  only the energy method and    
  Ladyzhenskaya inequality \eqref{eq:lad} leads to optimal time decay estimates. 
  
  In order to give an idea of our approach,  let us 
  consider the linearized version of (NS) about a null solution, namely the following 
  evolutionary Stokes equations: 
   \begin{equation}\label{eq:stokes}
\left\{\begin{aligned}
 &u_{t}-\Delta u+\nabla P=0&&\quad\hbox{in }\ \R_+\times\Omega,  \\
&\div u=0&&\quad\hbox{in }\ \R_+\times\Omega,\\
&u|_{t=0}=u_0&&\quad\hbox{in }\Omega.
\end{aligned}\right.\end{equation}
 Let us explain  how  to bound just in terms of $\|u_0\|_{L^2}$ 
 and  by elementary arguments (that are valid in any domain) the following quantities  for all $k\in\N$:
\begin{equation}\label{def:LH}\begin{aligned}
\cL_{2k}(t):= \|t^k u_t^{(k)}(t)\|_{L^2}&\andf \cH_{2k}(t):=\|t^k \nabla u_t^{(k)}(t)\|_{L^2},\\
\cL_{2k+1}(t):= \|t^{k+\frac12} \nabla u_t^{(k)}(t)\|_{L^2}&\andf \cH_{2k+1}(t):=\|t^{k+\frac12} u_t^{(k+1)}(t)\|_{L^2},
\end{aligned}\end{equation}
where $u_t^{(k)}$ stands for the $k$-th time derivative of $u.$ 
\medbreak
To handle the case of even exponents, we start from 
$$\partial_t(t^k u_t^{(k)}) - \Delta(t^ku_t^{(k)}) +\nabla(t^k P_t^{(k)})= kt^{k-1} u_t^{k}$$
then take the $L^2$ scalar product with $t^ku_t^{(k)}$ and perform an integration by parts to get
$$\frac12\frac d{dt} \cL_{2k}^2+\cH_{2k}^2=k\cH_{2k-1}^2. $$
For odd exponents, we rather take   the $L^2$ scalar product with $t^{k+1}u_t^{(k+1)}$ and get
$$\frac12\frac d{dt} \cL_{2k+1}^2+\cH_{2k+1}^2=\biggl(k+\frac12\biggr)\cH_{2k}^2. $$
In short, we have for all $m\in\N,$
$$\frac12\frac{d}{dt} \cL_m^2+\cH_m^2=\frac m2\cH_{m-1}^2$$
which immediately leads after  summation on $m$ and time integration to 
\begin{equation}\label{est:stokes}
\sum_{m=0}^\infty\frac{\cL_m^2(t)}{m!} + \int_0^t \sum_{m=0}^\infty\frac{\cH_m^2(\tau)}{m!}\,d\tau
=\|u_0\|_{L^2}^2,\qquad t\in\R_+.\end{equation}
We shall proceed in the same way for the Navier-Stokes system, treating the nonlinear term 
by combination of H\"older and   Ladyzhenskaya inequalities  (this is the only place where
dimension two comes into play). 
This   will lead to the following results:
  \begin{itemize}
  \item[---]  Gevrey type regularity (almost as good as \eqref{est:stokes}), that implies time decay estimates for
  derivatives of arbitrary order    in the case of small initial data;
  \item[---]   decay estimates at any order, in terms of $\|u_0\|_{L^2}$ for general finite energy solutions;
  \item[---]  small time Gevrey regularity in the case of large data;
  \item[---]  faster decay for all derivatives in case it is known beforehand that $\|u(t)\|_{L^2}$ has some algebraic decay. 
  \end{itemize}
  We conclude this introduction pointing out that  we here only considered the decay of time derivatives 
  both for simplicity  and because proving similar results for the space derivatives  
  requires the fluid domain to have enough  smoothness. The reader may refer to  Remark \ref{r:spacereg} for
  a short development on this issue.

  %%%%%%%%%%%%%%%%%%%%%%%%%%%%%%%%%%%%%%%%%

  \section{The case of small data}

  The main goal of this section is to prove the following theorem.
  \begin{thm} Let $\alpha>0.$ There exists a constant $c_\alpha$ depending only on $\alpha$ such that 
  for any data $u_0$ in $L^2_\sigma$ satisfying $\|u_0\|_{L^2}\leq c_\alpha,$
    the corresponding global finite energy solution $u$ satisfies
  \begin{multline}\label{est:smalldata}\sum_{k=0}^{\infty} \biggl(\frac{t^{2k}}{2^{2k}(k!)^{2+\alpha}} \|u_t^{(k)}(t)\|_{L^2}^2+
\frac{t^{2k+1}}{2^{2k+1} k! ((k+1)!)^{1+\alpha}} \|\nabla u_t^{(k)}(t)\|_{L^2}^2\biggr)\\+
\frac12\sum_{k=0}^\infty \int_0^t\biggl(\frac{\tau^{2k}}{2^{2k}(k!)^{2+\alpha}}\|\nabla u_\tau^{(k)}(\tau)\|_{L^2}^2
+\frac{\tau^{2k+1}}{2^{2k+1} k! ((k+1)!)^{1+\alpha}} \|u_\tau^{(k+1)}(\tau)\|_{L^2}^2\biggr)d\tau
\leq  \|u_0\|_{L^2}^2.\end{multline}
  \end{thm}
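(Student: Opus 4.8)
The plan is to run the weighted energy hierarchy sketched in the introduction directly on (NS), the only new ingredient being the convective term, which will be absorbed using the smallness of the data. Set $b_{2k}:=2^{2k}(k!)^{2+\alpha}$ and $b_{2k+1}:=2^{2k+1}\,k!\,((k+1)!)^{1+\alpha}$, so that, with $\cL_m,\cH_m$ as in \eqref{def:LH}, the left-hand side of \eqref{est:smalldata} is exactly $E(t)+\tfrac12\int_0^tD(\tau)\,d\tau$, where
\begin{equation*}
E:=\sum_{m\ge0}\frac{\cL_m^2}{b_m},\qquad D:=\sum_{m\ge0}\frac{\cH_m^2}{b_m},\qquad E(0)=\|u_0\|_{L^2}^2 .
\end{equation*}
First I would check that, for fixed $t>0$ (using that finite-energy two-dimensional solutions are smooth on $(0,\infty)\times\Omega$, and working first with the truncated sums $\sum_{m\le M}$), the computation of the introduction applies to (NS) and gives, for every $m\ge0$,
\begin{equation*}
\frac12\frac{d}{dt}\cL_m^2+\cH_m^2=\frac m2\cH_{m-1}^2+\cN_m ,
\end{equation*}
where $\cN_m$ is the contribution of the convective term $\div(u\otimes u)=(u\cdot\nabla)u$. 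Expanding $\partial_t^k[(u\cdot\nabla)u]$ by the Leibniz rule and discarding the term $\langle(u\cdot\nabla)u_t^{(k)},u_t^{(k)}\rangle$, which vanishes by incompressibility, one gets e.g.
\begin{equation*}
\cN_{2k}=-t^{2k}\sum_{j=1}^{k}\binom{k}{j}\bigl\langle(u_t^{(j)}\cdot\nabla)u_t^{(k-j)},\,u_t^{(k)}\bigr\rangle ,
\end{equation*}
together with the analogue for $\cN_{2k+1}$ (sum over $i=0,\dots,k$, tested factor $u_t^{(k+1)}$, no term dropped).

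The second step is the pointwise estimate of $\cN_m$, the only place where dimension two enters. In each term I would integrate by parts to carry the surviving gradient onto the top-order factor, then apply H\"older's inequality in the form $L^4\times L^2\times L^4$ and Ladyzhenskaya's inequality \eqref{eq:lad} to the two $L^4$ factors; the weights $t^k$ combine so that all powers of $t$ cancel (the quantities involved being scale invariant), leaving
\begin{equation*}
|\cN_{2k}|\le C_0\,\cH_{2k}\sum_{j=1}^{k}\binom{k}{j}\bigl(\cL_{2j}\cH_{2j}\bigr)^{1/2}\bigl(\cL_{2(k-j)}\cH_{2(k-j)}\bigr)^{1/2},
\end{equation*}
and the obvious analogue ($\cH_{2k+2}$ instead of $\cH_{2k}$, sum over $i$) for $\cN_{2k+1}$. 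Dividing by $b_m$, the crucial algebraic fact is
\begin{equation*}
\binom{k}{j}\,\frac{\sqrt{b_{2j}\,b_{2(k-j)}}}{\sqrt{b_{2k}}}=\binom{k}{j}^{-\alpha/2}\le1
\end{equation*}
(the odd analogue is $\bigl((k+1)\binom{k}{i}\bigr)^{-\alpha/2}\le1$): this is exactly what the Gevrey loss $(k!)^\alpha$ in $b_m$ is designed for, namely to beat the binomial coefficient coming from Leibniz. Writing $L_m:=\cL_m/\sqrt{b_m}$ and $H_m:=\cH_m/\sqrt{b_m}$, so $E=\sum_mL_m^2$ and $D=\sum_mH_m^2$, one is reduced to proving
\begin{equation*}
\sum_{m\ge0}\frac{|\cN_m|}{b_m}\le C_\alpha\sqrt{E}\,D
\end{equation*}
for some constant $C_\alpha$ depending only on $\alpha$.

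Granting this, the proof closes quickly. From $b_{2k+1}/b_{2k}=2(k+1)^{1+\alpha}$ and $b_{2k+2}/b_{2k+1}=2(k+1)$ one reads off $b_{m+1}\ge(m+1)b_m$ (equality for odd $m$), hence $\sum_{m\ge1}\tfrac{m}{2b_m}\cH_{m-1}^2=\sum_{n\ge0}\tfrac{n+1}{2b_{n+1}}\cH_n^2\le\tfrac12D$; summing the hierarchy over $m$ then yields
\begin{equation*}
\frac12\frac{d}{dt}E+\frac12D\le\sum_{m\ge0}\frac{\cN_m}{b_m}\le C_\alpha\sqrt{E}\,D ,
\end{equation*}
so that $\tfrac{d}{dt}E+\tfrac12D\le0$ whenever $E(t)\le(4C_\alpha)^{-2}$. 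Choosing $c_\alpha\in(0,(4C_\alpha)^{-1})$ and assuming $\|u_0\|_{L^2}\le c_\alpha$, so that $E(0)=\|u_0\|_{L^2}^2<(4C_\alpha)^{-2}$, continuity of $t\mapsto E(t)$ prevents $E$ from ever reaching $(4C_\alpha)^{-2}$; therefore $\tfrac{d}{dt}E+\tfrac12D\le0$ for all $t>0$, whence $E$ is nonincreasing and $E(t)+\tfrac12\int_0^tD\le E(0)=\|u_0\|_{L^2}^2$, which is \eqref{est:smalldata}. One finally makes this rigorous by passing to the limit $M\to\infty$ in the truncated estimates and recovering continuity up to $t=0$ from $u\in C([0,\infty);L^2_\sigma)$ and the energy balance \eqref{ineq:L2NS}.

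The hard part will be the nonlinear bound $\sum_m|\cN_m|/b_m\le C_\alpha\sqrt{E}\,D$. The factor $\binom{k}{j}^{-\alpha/2}$ is useless by itself for the sum over $k$ when $j$ is small (it is only $\sim k^{-\alpha/2}$, not summable), so a term-by-term estimate fails. The point is that once $j$ is frozen the two remaining indices $k-j$ and $k$ differ by the fixed amount $j$, so the sum over $k$ is a near-diagonal sum and, by Cauchy--Schwarz together with $\sum_iL_i^2\le E$ and $\sum_iH_i^2\le D$, it is bounded by $\lesssim E^{1/4}D^{3/4}$ uniformly in $j$; the genuine combinatorial gain, in the harmless form $\binom{2j}{j}^{-\alpha/2}\le2^{-j\alpha/2}$, is used only to sum the resulting geometric series in $j$. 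One must also treat by hand the extremal split $j=k$ (and $i=k$ in the odd case), where $u_t^{(0)}=u$ appears: there one keeps the product $\cL_{2k}\cH_{2k}$ together, so that $\sum_kL_{2k}H_{2k}\le\sqrt{E}\sqrt{D}$, and uses $\|\nabla u\|_{L^2}=H_0\le\sqrt{D}$, yielding again a contribution $\lesssim\sqrt{E}\,D$. Assembling these pieces, with the analogous treatment of the odd terms, gives the required inequality.
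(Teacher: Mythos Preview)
Your proposal is correct and follows essentially the same route as the paper: derive the weighted hierarchy $\tfrac12\frac{d}{dt}\cL_m^2+\cH_m^2=\tfrac{m}{2}\cH_{m-1}^2+\cN_m$, bound each convective contribution via H\"older and Ladyzhenskaya, normalize by the Gevrey weights so that the Leibniz binomial becomes $\binom{k}{j}^{-\alpha/2}$, and close by a bootstrap on the smallness of $E$. The only notable presentational difference is in how the double sum $\sum_k\sum_j\binom{k}{j}^{-\alpha/2}(L_{2j}H_{2j})^{1/2}(L_{2(k-j)}H_{2(k-j)})^{1/2}H_{2k}$ is controlled: the paper writes it as a discrete convolution and invokes Young's inequality $\ell^{4/3}*\ell^{4/3}\hookrightarrow\ell^2$ followed by a three-factor H\"older, whereas you symmetrize, freeze $j$, apply Cauchy--Schwarz in $k$, and sum the resulting geometric series in $j$; both yield the same bound $C_\alpha\sqrt{E}\,D$. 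Your separate treatment of the extremal split $j=k$ is not needed in the small-data regime (the paper absorbs it into the uniform estimate), though it does no harm and in fact anticipates the large-data argument of the next section.
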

  \begin{proof}
  %  Let us introduce the functionals that will be used throughout the paper: 
  % for all $k\in\N,$ we set
%$$\begin{aligned}
%\cL_{2k}(t):= \|t^k u_t^{(k)}(t)\|_{L^2}&\andf \cH_{2k}(t):=\|\nabla t^k u_t^{(k)}(t)\|_{L^2},\\
%\cL_{2k+1}(t):= \|t^{k+\frac12} \nabla u_t^{(k)}(t)\|_{L^2}&\andf \cH_{2k+1}(t):=\|t^{k+\frac12} u_t^{(k+1)}(t)\|_{L^2}.
%\end{aligned}$$
Here and in the following sections, we concentrate on the proof of a priori estimates. 
The underlying idea is that one can get exactly the same bounds 
for any  approximation that relies on the use of spectral orthogonal projectors 
(like e.g. the Galerkin method) and that following the compactness procedure that 
is used for proving Theorem \ref{thm:0} ensures that the solution that is constructed in this way 
satisfies the announced inequalities. 

\smallbreak
Now, with the notation introduced in \eqref{def:LH}, the energy balance  \eqref{eq:L2NS} translates into 
\begin{equation}\label{eq:H0}
\frac12\cL_0^2(t)+\int_0^t\cH_0^2\,d\tau=\frac12\|u_0\|_{L^2}^2.
\end{equation}
To handle $\cL_m$ and $\cH_m$ in the case of  odd index $m,$ we apply  $\d_t^{k-1}$ (for any $k\geq1$)
 to (NS) and use Leibniz formula, getting:
$$u_t^{(k)} -\Delta u_t^{(k-1)}+\nabla P_t^{(k-1)} =-\sum_{j=0}^{k-1} \binom{k-1}{j} 
\div\bigl(u_t^{(j)}\otimes u_t^{(k-1-j)}\bigr)\cdotp$$
Taking the scalar product with $t^{2k-1} u_t^{(k)}$ and integrating by parts where needed yields
$$\displaylines{\|t^{k-\frac12} u_t^{(k)}\|_{L^2}^2+\int_\Omega t^{2k-1}\nabla u_t^{(k-1)}\cdot\d_t\nabla u_t^{(k-1)}\,dx
=-\sum_{j=0}^{k-1} \binom{k-1}{j}\cR_{j,2k-1}\hfill\cr\hfill
\with \cR_{j,2k-1}:= \int_\Omega  \div\bigl(t^ju_t^{(j)}\otimes(t^{k-1-j}u_t^{(k-1-j)})\bigr)\cdot (t^{k} u_t^{(k)})\,dx,}$$
whence
\begin{equation}\label{eq:2k-1}
\frac12 \frac d{dt} \cL_{2k-1}^2 +\cH_{2k-1}^2=\biggl(k-\frac12\biggr)\cH_{2k-2}^2- \sum_{j=0}^{k-1} \binom{k-1}{j} \cR_{j,2k-1}.\end{equation}
For  all $j\in\{0\cdots,k-1\},$ performing an integration by parts gives
$$\cR_{j,2k-1}:=  - \int_\Omega  \bigl(t^ju_t^{(j)}\otimes(t^{k-1-j}u_t^{(k-1-j)})\bigr)\cdot
 (t^{k} \nabla u_t^{(k)})\,dx,$$  whence using H\"older and Ladyzhenskaya inequality and the definition of $\cL_m$ and $\cH_m,$
 $$\begin{aligned}
  \cR_{j,2k-1} &\leq \|t^j u_t^{(j)}\|_{L^4}\|t^{k-1-j}u_t^{(k-1-j)}\|_{L^4} \|t^k\nabla u_t^{(k)}\|_{L^2}\\
   &\leq C_0\cL_{2j}^{1/2}\cH_{2j}^{1/2}\cL_{2k-2-2j}^{1/2}\cH_{2k-2-2j}^{1/2}\cH_{2k}.
   \end{aligned}$$
 Hence we have 
\begin{equation}\label{eq:cL2k-1b}
\frac12 \frac d{dt} \cL_{2k-1}^2 +\cH_{2k-1}^2\leq
\biggl(k-\frac12\biggr)\cH_{2k-2}^2+C_0 \sum_{j=0}^{k-1} \binom{k\!-\!1}{j} \cL_{2j}^{1/2}\cH_{2j}^{1/2}\cL_{2k-2-2j}^{1/2}
\cH_{2k-2-2j}^{1/2}\cH_{2k}.\end{equation}
In order to handle  even indices, we apply  $t^k\d_t^k$ to (NS). Using Leibniz formula yields:
$$\displaylines{\d_t(t^k u_t^{(k)}) + \div\bigl(u\otimes t^k u_t^{(k)}\bigr)-\Delta(t^k u_t^{(k)})+\nabla(t^kP_t^{(k)}) 
\hfill\cr\hfill= kt^{k-1} u_t^{(k)}
-\sum_{j=1}^k \binom{k}{j} \div\bigl(t^ju_t^{(j)}\otimes (t^{k-j}u_t^{(k-j)})\bigr)\cdotp}$$
Hence taking the $L^2$ scalar product with $t^ku_t^{(k)}$  and performing suitable 
integration by parts gives:
\begin{multline}\label{eq:2k}
\frac12 \frac d{dt} \cL_{2k}^2 +\cH_{2k}^2 = k \cH_{2k-1}^2 - \sum_{j=1}^k \binom{k}{j} \cR_{j,2k}\\
\with \cR_{j,2k}:=\int_\Omega \div\bigl(t^ju_t^{(j)}\otimes(t^{k-j}u_t^{(k-j)})\bigr)\cdot (t^k u_t^{(k)})\,dx.\end{multline}
%For  $j\in\{1,\cdots,k-1\},$ we have
Observe that 
\begin{equation}\label{eq:Rj2k}
 \cR_{j,2k}:=- \int_\Omega  \bigl(t^ju_t^{(j)}\otimes(t^{k-j}u_t^{(k-j)})\bigr)\cdot (t^{k}\nabla u_t^{(k)})\,dx.\end{equation}
Therefore, combining H\"older inequality and \eqref{eq:lad} gives  
 $$\begin{aligned} \cR_{j,2k} &\leq
 \|t^ju_t^{(j)}\|_{L^4} \|t^{k-j}u_t^{(k-j)}\|_{L^4} \|t^k \nabla u_t^{(k)}\|_{L^2}\\
 &\leq C_0\cL_{2j}^{1/2}\cH_{2j}^{1/2}\cL_{2k-2j}^{1/2}\cH_{2k-2j}^{1/2}\cH_{2k}.\end{aligned}$$
%For $j=k,$ we do not integrate by parts and readily get
%$$\cR_{k,2k}\leq  \|\nabla u\|_{L^2}\|t^ku_t^{(k)}\|_{L^4}^2  \leq C_0\cH_0 \cL_{2k}\cH_{2k}.$$
   Hence we have 
   \begin{equation}\label{eq:cL2kb}
\frac12 \frac d{dt} \cL_{2k}^2 +\cH_{2k}^2 \leq k \cH_{2k-1}^2 + C_0\sum_{j=0}^{k-1} \binom{k}{j}
    \cL_{2j}^{1/2}\cH_{2j}^{1/2}\cL_{2k-2j}^{1/2}\cH_{2k-2j}^{1/2}\cH_{2k}.
    %+C_0\cH_0\cL_{2k}\cH_{2k}.
    \end{equation}
      Let us use renormalize the functionals $\cL_m$ and $\cH_m$ as follows: 
       \begin{equation}\label{eq:1ren}
\wt  L_{2k}:=\frac{\cL_{2k}}{2^kk!},\!\!\quad \wt H_{2k}:=\frac{\cH_{2k}}{2^kk!},\!\!\quad
 \wt L_{2k-1}:=\frac{\sqrt2\,\cL_{2k-1}}{2^{k}\sqrt{(k\!-\!1)! k!}}
 \!\!\andf\!\!    \wt H_{2k-1}:=\frac{\sqrt2\,\cH_{2k-1}}{2^{k}\sqrt{(k\!-\!1)! k!}} \cdotp\end{equation}
Then, \eqref{eq:cL2k-1b} and \eqref{eq:cL2kb} become:
 $$ \begin{aligned}
 \frac12\frac d{dt}\wt L_{2k-1}^2 +\wt H_{2k-1}^2&\leq \frac12\biggl(\frac{k-\frac12}{k}\biggr)\wt H_{2k-2}^2+C_0 \biggl(\sum_{j=0}^{k-1} \wt L_{2j}^{1/2}\wt H_{2j}^{1/2}\wt L_{2k\!-\!2\!-\!2j}^{1/2}\wt H_{2k\!-\!2\!-\!2j}^{1/2}\biggr)\wt H_{2k},\\
  \frac12\frac d{dt}\wt L_{2k}^2 +\wt H_{2k}^2&\leq \
  \frac12\wt H_{2k-1}^2+C_0 \biggl(\sum_{j=1}^{k-1} \wt L_{2j}^{1/2}\wt H_{2j}^{1/2}\wt L_{2k\!-\!2j}^{1/2}\wt H_{2k\!-\!2j}^{1/2}\biggr)\wt H_{2k}. \end{aligned}$$
In order to get a nice control of the sum, we perform a second renormalization  as follows
for some suitable   nonnegative nondecreasing sequence $(c_m)_{m\in\N}$:
  \begin{equation}\label{eq:Lm}   \wt L_{2m-1}=c_m L_{2m-1},\quad  \wt H_{2m-1}=c_m  H_{2m-1},\quad \wt L_{2m}=c_m L_{2m}\andf   \wt H_{2m}=c_m H_{2m}.\end{equation}
The above inequalities translate into 
 $$ \begin{aligned}
 \frac12\frac d{dt} L_{2k-1}^2 +H_{2k-1}^2&\leq \frac12%\biggl(\frac{k-\frac12}{k}\biggr)
 \frac{c_{k-1}^2}{c_k^2}H_{2k-2}^2
 +C_0 \biggl(\sum_{j=0}^{k-1}\frac{c_{j} c_{k-1-j}}{c_k} L_{2j}^{1/2}H_{2j}^{1/2} L_{2k\!-\!2\!-\!2j}^{1/2}H_{2k\!-\!2\!-\!2j}^{1/2}\biggr)H_{2k},\\
  \frac12\frac d{dt} L_{2k}^2 +H_{2k}^2&\leq \frac12 H_{2k-1}^2+C_0 \biggl(\sum_{j=1}^{k} 
  \frac{c_{j} c_{k-j}}{c_k}L_{2j}^{1/2} H_{2j}^{1/2} L_{2k\!-\!2j}^{1/2} H_{2k\!-\!2j}^{1/2}\biggr) H_{2k}.
   %+ C_0\cH_0  L_{2k}H_{2k}.
    \end{aligned}$$
Let us take $c_j=(j!)^\alpha$ with  $\alpha>0$ so  that $  \frac{c_{j} c_{k-j}}{c_k}=\binom kj^{-\alpha}.$ 
Since $\frac{i}{k-j+i}\leq\frac jk$ for all $i\in\{1,\cdots,j\},$ we 
 have $\binom kj\geq \bigl(\frac kj\bigr)^j$   for all $j\in\{0,\cdots,k\}.$ Remembering that  $\binom kj=\binom k{k-j},$  we get
 $$  \frac{c_{j} c_{k-j}}{c_k} \leq \min\biggl(\Bigl(\frac jk\Bigr)^{\alpha j}, \Bigl(\frac{k-j}k\Bigr)^{\alpha(k-j)}\biggr)\cdotp$$
Using the obvious
 bound $j/k\leq1/2$ for $j\leq k/2,$ we conclude that 
\begin{equation}\label{eq:ccc0}\frac{c_{j} c_{k-j}}{c_k} \leq \min\bigl(2^{-j\alpha}, 2^{-(k-j)\alpha}\bigr)
\ \hbox{ for all }\ j\in\{0,\cdots,k\}.\end{equation}
Hence we have 
$$\sum_{j=0}^{k}   \frac{c_{j} c_{k-j}}{c_k}L_{2j}^{1/2} H_{2j}^{1/2} L_{2k\!-\!2j}^{1/2} H_{2k\!-\!2j}^{1/2}
  \leq \sum_{j=0}^k \sqrt{2^{-j\alpha} L_{2j} H_{2j}}   \sqrt{2^{-(k-j)\alpha} L_{2k-2j} H_{2k-2j}}.$$ 
  Similarly, as $c_{k-1}\leq c_k,$ we have
  $$\sum_{j=0}^{k-1}\frac{c_{j} c_{k-1-j}}{c_k} L_{2j}^{1/2}H_{2j}^{1/2} L_{2k\!-\!2\!-\!2j}^{1/2}H_{2k\!-\!2\!-\!2j}^{1/2}
  \leq \sum_{j=0}^{k-1}\sqrt{2^{-j\alpha} L_{2j}H_{2j}} \sqrt{2^{-(k-1-j)\alpha} L_{2k\!-\!2\!-\!2j}H_{2k\!-\!2\!-\!2j}}\cdotp$$
Hence, summing up the above two inequalities yields for all $k\geq1,$
\begin{multline}\label{eq:LHk}\frac12\frac d{dt}\Bigl( L_{2k-1}^2 + L_{2k}^2\Bigr) +\frac12H_{2k-1}^2+H_{2k}^2
\leq \frac12H_{2k-2}^2\\+C_0\biggl( \sum_{j=0}^k \sqrt{2^{-j\alpha} L_{2j} H_{2j}}   \sqrt{2^{-(k-j)\alpha} L_{2k-2j} H_{2k-2j}}\biggr)H_{2k}\\+C_0\biggl( \sum_{j=0}^{k-1} \sqrt{2^{-j\alpha} L_{2j} H_{2j}} 
  \sqrt{2^{-(k-1-j)\alpha} L_{2k-2j-2} H_{2k-2j-2}}\biggr)H_{2k}.\end{multline} 
Let us introduce   the notation:
$$ \mathbb L_m^2:=\sum_{k=0}^{m}  L_{k}^2\andf  \mathbb H_m^2:=\sum_{k=0}^{m}  H_{k}^2.$$
Then summing up \eqref{eq:H0} and \eqref{eq:LHk} from $k=1$ to $k=n$ gives after using the 
convolution inequality 
$$\sum_{k=0}^n\sum_{j=0}^n a_j b_{k-j} c_k \leq \|(a_j)\|_{\ell_n^{4/3}}  \|(b_j)\|_{\ell_n^{4/3}}  \|(c_j)\|_{\ell_n^{2}}
\with \ell^r_n:=\ell^r(\{0,\cdots,n\}),$$ 
\begin{equation}\label{eq:LL2n}
\frac12\frac d{dt}\L_{2n}^2+\frac12\H_{2n}^2 +\frac12H_{2n}^2\leq
2C_0\|(2^{-j\alpha} L_{2j} H_{2j})\|_{\ell_n^{2/3}}\H_{2n}.
\end{equation}
 H\"older inequality implies that
$$\begin{aligned}
\|(2^{-j\alpha} L_{2j} H_{2j})\|_{\ell_n^{2/3}}&\leq \|(2^{-j\alpha})\|_{\ell^2}  \|(L_{2j})\|_{\ell^2}  \|(H_{2j})\|_{\ell^2} \\
&\leq C_\alpha\L_{2n}\H_{2n}\with C_\alpha:=\sqrt{\frac1{1-2^{-2\alpha}}}\cdotp\end{aligned}$$
Hence, whenever $2C_0C_\alpha \L_{2n} \leq 1/4,$ we have
$$\frac d{dt}\L_{2n}^2+\frac12\H_{2n}^2 \leq0.$$
Since $\L_{2n}(0)=\|u_0\|_{L^2},$ a bootstrap argument allows to conclude that if 
\begin{equation}\label{eq:smalldata}8C_0C_\alpha\|u_0\|_{L^2}<1,\end{equation}
 then we have for all time $t\geq0$ and $n\in\N,$
$$\L_{2n}^2(t)+\frac12\int_0^t \H_{2n}^2(\tau)\,d\tau \leq \|u_0\|_{L^2}^2.$$
Applying the monotonous convergence theorem then leads  to \eqref{est:smalldata}.
\end{proof}
%\begin{remark}\label{r:heat}
%For the heat equation, one renormalization  is enough (one can take $\alpha=0$) and the above computations lead to 
%$$\displaylines{\sum_{k=1}^{\infty} \biggl(\frac{t^{2k}}{2^{2k}(k!)^{2}} \|u_t^{(k)}(t)\|_{L^2}^2+
%\frac{t^{2k-1}}{2^{2k-1} (k\!-\!1)! k!} \|\nabla u_t^{(k-1)}(t)\|_{L^2}^2\biggr)\hfill\cr\hfill+
%\sum_{k=1}^\infty \int_0^t\biggl(\frac{\tau^{2k}}{2^{2k+1}(k!)^{2}}\|\nabla u_\tau^{(k)}(\tau)\|_{L^2}^2
%+\frac{\tau^{2k-1}}{2^{2k} (k\!-\!1)! k!} \|u_\tau^{(k)}(t)\|_{L^2}^2\biggr)d\tau\leq  \frac12\|u_0\|_{L^2}^2.}$$
%\end{remark}
%\begin{remark} This first estimate is clearly valid for $(GNS).$\end{remark}

%%%%%%%%%%%%%%%%%%%%%%%%%%%%%%%%%%%%%%%

\section{The case of large data}

Here we want to establish time decay estimates for derivatives of $u$ at any order, in the case
of general, possibly large,  finite energy data. The main result is: 
\begin{thm}\label{thm:2} 
Let $\alpha>0.$ There exists a constant $C_\alpha$ depending only on $\alpha$ such that 
  for any initial  data $u_0$ in $L^2_\sigma$  and integer $n,$ we have  
  \begin{multline}\label{est:largedata}\sum_{k=0}^{n} \biggl(\frac{t^{2k}}{2^{2k}(k!)^{2+\alpha}} \|u_t^{(k)}(t)\|_{L^2}^2+
\frac{t^{2k+1}}{2^{2k+1} k! ((k+1)!)^{1+\alpha}} \|\nabla u_t^{(k)}(t)\|_{L^2}^2\biggr)\\+
\frac12\sum_{k=0}^n \int_0^t\biggl(\frac{\tau^{2k}}{2^{2k+1}(k!)^{2+\alpha}}\|\nabla u_\tau^{(k)}(\tau)\|_{L^2}^2
+\frac{\tau^{2k+1}}{2^{2k+1} k! ((k+1)!)^{1+\alpha}} \|u_\tau^{(k+1)}(t)\|_{L^2}^2\biggr)d\tau\\
\leq  C_\alpha^{2^n-1}
 \biggl(\|u_0\|_{L^2}^2  \exp\Bigl(\frac{C_0^2\|u_0\|_{L^2}^2}2\Bigr)\biggr)^{2^n},\end{multline}
  where $C_0$ stands for the optimal constant in \eqref{eq:lad}.
\end{thm}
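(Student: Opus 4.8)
The plan is to argue by induction on $n$, recycling the algebraic skeleton of the small‑data proof but replacing the smallness‑based absorption of the nonlinear term by an \emph{energy‑weighted} Gronwall scheme. I keep the functionals $\cL_m,\cH_m$, the two renormalisations \eqref{eq:1ren}--\eqref{eq:Lm} with $c_j=(j!)^\alpha$ (so that, up to the harmless relabelling $\alpha\mapsto2\alpha$, the left‑hand side of \eqref{est:largedata} is $\L_{2n}^2(t)+\tfrac12\int_0^t\H_{2n}^2\,d\tau$), and I introduce the weight $\theta(t):=\exp\bigl(-C_0^2\int_0^t\|\nabla u\|_{L^2}^2\,d\tau\bigr)$, which by the energy balance \eqref{eq:L2NS} satisfies $1\ge\theta(t)\ge\exp(-\tfrac12C_0^2\|u_0\|_{L^2}^2)$. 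Working with $\widehat Q_n:=\sup_{t\ge0}\bigl(\theta\L_{2n}^2+\tfrac12\int_0^t\theta\,\H_{2n}^2\,d\tau\bigr)$, I aim to prove the doubling estimate $\widehat Q_n\le C_\alpha\,\widehat Q_{n-1}^2$; undoing the weight at the very end costs one factor $\exp(\tfrac12C_0^2\|u_0\|_{L^2}^2)$, which is precisely what is built into $Q_0:=\|u_0\|_{L^2}^2\exp(\tfrac12C_0^2\|u_0\|_{L^2}^2)$, and iterating the doubling from the base case yields $\widehat Q_n\le C_\alpha^{2^n-1}Q_0^{2^n}$, hence \eqref{est:largedata}. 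As usual the a priori bounds are first established on a Galerkin approximation, so that $\cL_m(0)=\cH_m(0)=0$ for $m\ge1$ and $\L_{2n}^2(0)=\|u_0\|_{L^2}^2$, and the conclusion for the Leray solution follows by the compactness procedure of Theorem \ref{thm:0} and the monotone convergence theorem for $n\to\infty$.

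For the base case $n=0$ the energy identity \eqref{eq:L2NS} controls $\cL_0,\cH_0$; for the level‑one functionals one tests the equation of $u_t$ against $t\,u_t$, noting that the transport contribution $\int(u\cdot\nabla)u_t\cdot u_t$ vanishes by incompressibility, while the only surviving nonlinear term $-\int(u_t\cdot\nabla)u\cdot(t\,u_t)$ is, by Hölder and \eqref{eq:lad}, at most $C_0\|\nabla u\|_{L^2}\cL_1\cH_1\le\tfrac12\cH_1^2+\tfrac12C_0^2\|\nabla u\|_{L^2}^2\cL_1^2$. Hence $\tfrac d{dt}\cL_1^2+\cH_1^2\le\|\nabla u\|_{L^2}^2+C_0^2\|\nabla u\|_{L^2}^2\cL_1^2$, i.e. $\tfrac d{dt}(\theta\cL_1^2)+\theta\cH_1^2\le\theta\|\nabla u\|_{L^2}^2\le\|\nabla u\|_{L^2}^2$; integrating and using $\int_0^\infty\|\nabla u\|_{L^2}^2\le\tfrac12\|u_0\|_{L^2}^2$ gives $\theta\cL_1^2+\int_0^t\theta\cH_1^2\le\tfrac12\|u_0\|_{L^2}^2$, so $\widehat Q_0\le C\|u_0\|_{L^2}^2$.

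For the inductive step I apply $\d_t^{k-1}$ and $t^k\d_t^k$ to (NS) for $1\le k\le n$ exactly as in \eqref{eq:cL2k-1b} and \eqref{eq:2k}--\eqref{eq:cL2kb}; the new ingredient is the top transport term in the even identity. In \eqref{eq:2k} the term $\div(u\otimes t^k u_t^{(k)})$ on the left gives $0$ upon testing against $t^k u_t^{(k)}$, and among the remaining $\cR_{j,2k}$ the only one touching level $k$ is $\cR_{k,2k}=\int\bigl((t^k u_t^{(k)})\cdot\nabla u\bigr)\cdot(t^k u_t^{(k)})\,dx$, for which Hölder and \eqref{eq:lad} yield $|\cR_{k,2k}|\le C_0\cL_{2k}\cH_{2k}\cH_0\le\tfrac12\cH_{2k}^2+\tfrac12C_0^2\|\nabla u\|_{L^2}^2\cL_{2k}^2$; multiplying the $k$‑th even identity by $\theta$, the dangerous term $\tfrac12C_0^2\|\nabla u\|_{L^2}^2\cL_{2k}^2$ is absorbed by $-\tfrac d{dt}\theta\cdot\cL_{2k}^2$. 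Every other $\cR_{j,2k}$ and every $\cR_{j,2k-1}$ involves only functionals of level $\le k-1$ together with a single factor $\cH_{2k}$ (one places two factors of the product in $L^4$ via \eqref{eq:lad} and keeps the last in $L^2$); after the two renormalisations, a Young inequality dumping an $\varepsilon\,\widehat\H_{2k}^2$ into the dissipation, and the $\ell^{4/3}\!\times\!\ell^{4/3}\!\times\!\ell^2$ convolution/Hölder bookkeeping of the small‑data proof (which produces $C_\alpha$), one is left with a differential inequality of the form $\tfrac12\tfrac d{dt}(\theta\L_{2n}^2)+\tfrac12\theta\,\H_{2n}^2\le S_{n-1}(t)$, where $S_{n-1}\ge0$ depends only on $\cL_m,\cH_m$ and $\|\nabla u\|_{L^2}$ at levels $\le n-1$. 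Integrating in time and invoking the energy identity and the induction hypothesis, one checks $\int_0^\infty S_{n-1}\,d\tau\le C_\alpha\,\widehat Q_{n-1}^2$ (using also $\widehat Q_{n-1}\ge\widehat Q_0\ge c\|u_0\|_{L^2}^2$ to swallow the term $\|u_0\|_{L^2}^2$ coming from the initial data), whence $\widehat Q_n\le C_\alpha\,\widehat Q_{n-1}^2$, which is the desired doubling.

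The hard part is not any single estimate but the bookkeeping that makes everything uniform in $n$ and threads the constants through to the exact form \eqref{est:largedata}: one must verify that, after renormalisation by $2^k k!$ and then by $(k!)^\alpha$, the Leibniz binomials $\binom kj$ are fully absorbed as in \eqref{eq:ccc0} with an $n$‑independent constant; that $S_{n-1}$ genuinely involves \emph{only} levels $\le n-1$ — this is exactly where the isolation of $\cR_{k,2k}$ and the weight $\theta$ are indispensable, since for large data this term cannot be handled by smallness; and that the Young parameters are tuned so that enough of each $\widehat\H_{2k}^2$ survives to soak up the $\varepsilon$‑errors while the coefficient of $\theta\L_{2k}^2$ fed back to the right‑hand side is \emph{precisely} the one annihilated by $\theta$, so that the weight is used only once and the factor $\exp(\tfrac12C_0^2\|u_0\|_{L^2}^2)$ is never amplified along the induction. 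Once this is arranged, the iteration $\widehat Q_n\le C_\alpha\widehat Q_{n-1}^2$ from $\widehat Q_0\le C\|u_0\|_{L^2}^2$ and the single un‑weighting at the end give \eqref{est:largedata}.
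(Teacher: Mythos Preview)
Your strategy is the paper's: isolate the top nonlinear term $\cR_{k,2k}$, bound it by $C_0\cH_0\cL_{2k}\cH_{2k}$ and absorb the resulting $C_0^2\cH_0^2\L_{2n}^2$ via the exponential weight $\theta$, then show that the remaining convolution terms involve only levels $\leq 2n-2$ (times one $\H_{2n}$) and close by a doubling recursion in $n$. This is exactly the paper's proof.

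There is, however, a genuine slip in your bookkeeping of the doubling constant. After multiplying by $\theta$ and applying Young, the source term is $\sim C_\alpha^2\,\theta\,\L_{2n-2}^2\H_{2n-2}^2$. To bound $\int_0^\infty\theta\,\L_{2n-2}^2\H_{2n-2}^2\,d\tau$ by $\widehat Q_{n-1}^{\,2}$ you would need \emph{two} factors of $\theta$ (one for $\sup\theta\L_{2n-2}^2$ and one for $\int\theta\H_{2n-2}^2$), but there is only one; replacing the missing $\theta$ by its infimum costs exactly $e^{C_0^2\|u_0\|_{L^2}^2/2}$. Hence the recursion you actually get is
\[
\widehat Q_n\ \le\ \|u_0\|_{L^2}^2\;+\;C_\alpha\,e^{C_0^2\|u_0\|_{L^2}^2/2}\,\widehat Q_{n-1}^{\,2},
\]
with a $u_0$-dependent doubling constant---this is why the paper writes $A_{\alpha,u_0}$ rather than $A_\alpha$. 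Your last paragraph asserts that the exponential ``is never amplified along the induction,'' but it is: the amplification to the power $2^n$ is precisely what produces the factor $\bigl(\|u_0\|_{L^2}^2\,e^{C_0^2\|u_0\|_{L^2}^2/2}\bigr)^{2^n}$ in \eqref{est:largedata}, and quietly swapping $\widehat Q_0$ for $Q_0$ in the last line does not account for it (with your stated doubling and your base case $\widehat Q_0\le C\|u_0\|_{L^2}^2$, iteration would give the strictly stronger---and false in general---bound $C_\alpha^{2^n-1}\|u_0\|_{L^2}^{2\cdot 2^n}$ with only a single exponential from the final un-weighting). Once you accept the $u_0$-dependent constant and iterate $X_n\le K X_{n-1}^2$ with $K\sim C_\alpha\|u_0\|_{L^2}^2 e^{C_0^2\|u_0\|_{L^2}^2/2}$ as the paper does, the argument closes and yields exactly \eqref{est:largedata}.
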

\begin{proof} To handle the case of general   data,  we slightly modify \eqref{eq:cL2kb}.
In fact, starting from \eqref{eq:2k},  we use \eqref{eq:Rj2k} only for $j=0,\cdots,k-1$
and bound $\cR_{k,2k}$ as follows: 
$$\cR_{k,2k}\leq  \|\nabla u\|_{L^2}\|t^ku_t^{(k)}\|_{L^4}^2  \leq C_0\cH_0 \cL_{2k}\cH_{2k}.$$
This leads to 
$$\frac12 \frac d{dt} \cL_{2k}^2 +\cH_{2k}^2 \leq k \cH_{2k-1}^2 + C_0\sum_{j=0}^{k-1} \binom{k}{j}
    \cL_{2j}^{1/2}\cH_{2j}^{1/2}\cL_{2k-2j}^{1/2}\cH_{2k-2j}^{1/2}\cH_{2k}
   +C_0\cH_0\cL_{2k}\cH_{2k}.$$
Then, adding up  \eqref{eq:cL2k-1b} leads  
after the same succession of renormalizations as in the previous section to 
$$\displaylines{
\frac12\frac d{dt}(L_{2k}^2+L_{2k+1}^2)+H_{2k+1}^2+\frac12H_{2k}^2\leq\frac12 H_{2k-1}^2
+C_0\cH_0L_{2k}H_{2k}\hfill\cr\hfill
+C_0\biggl(\sum_{j=0}^{k}\sqrt{2^{-j\alpha} L_{2j}H_{2j}} \sqrt{2^{-(k-1-j)\alpha} L_{2k-2-2j}H_{2k-2-2j}}\biggr)
\hfill\cr\hfill
+C_0\biggl(\sum_{j=1}^{k-1}\sqrt{2^{-j\alpha} L_{2j}H_{2j}} \sqrt{2^{-(k-j)\alpha} L_{2k-2j}H_{2k-2j}}\biggr)\cdotp}
$$
 We may use for each $k$ that
$C_0\cH_0L_{2k}H_{2k}\leq \frac12H_{2k}^2 +\frac12 C_0^2\cH_0^2 L_{2k}^2$ and, 
after summing  for $k=1$ to $n,$ the second and third lines may be bounded
by $C_\alpha\L_{2n-2}\H_{2n-2}\H_{2n}.$ 
Hence, 
$$\frac d{dt}\L_{2n}^2+\H_{2n}^2\leq   C_0^2\cH_0^2 \L_{2n}^2+4 C_0C_\alpha \L_{2n-2}\H_{2n-2}\H_{2n},\qquad n\geq1.
$$
Performing the change of function:
$$\L_{2n}(t)= e^{\frac12\int_0^t C_0^2\cH_0^2(\tau)\,d\tau} \wt \L_{2n}(t)\andf
\H_{2n}(t)= e^{\frac12\int_0^t C_0^2\cH_0^2(\tau)\,d\tau} \wt \H_{2n}(t)$$
and using \eqref{eq:H0} yields
%the fact that for all $t\geq0,$ we have $$\frac12\cL^2_0(t)+\int_0^t\cH_0^2(\tau)\,d\tau=\frac12\|u_0\|_{L^2}^2$$ yields
%$$\frac d{dt}\wt\L_{2n}^2+\wt\H_{2n}^2\leq   A_{\alpha,u_0} \wt\L_{2n-2}\wt\H_{2n-2}\wt\H_{2n}
%\with A_{\alpha,u_0}:= 4C_0C_\alpha \exp\biggl(\frac{C_0^2\|u_0\|_{L^2}^2}4\biggr)\cdotp$$ This implies that
$$\frac d{dt}\wt\L_{2n}^2+\frac12\wt\H_{2n}^2\leq  \frac12A^2_{\alpha,u_0} \wt\L_{2n-2}^2\wt\H_{2n-2}^2$$
and thus, after time integration, 
\begin{equation}\label{eq:Ln}\wt\L_{2n}^2(t)+\frac12\int_0^t\wt\H_{2n}^2\,d\tau\leq\|u_0\|_{L^2}^2 +  \frac12A^2_{\alpha,u_0}\int_0^t \wt\L_{2n-2}^2\wt\H_{2n-2}^2\,d\tau.\end{equation}
Let us denote (assuming of course that $u_0\not=0$) 
$$X_n:=\|u_0\|_{L^2}^{-2}\biggl(\sup_{t\geq0} \wt\L_{2n}^2(t)+\frac12\int_0^\infty\wt\H_{2n}^2\,d\tau\biggr)$$
and use the inequality $2ab\leq(a+b)^2.$ Then, $X_0=1$ and \eqref{eq:Ln} can be rewritten as
$$X_n\leq 1+  \frac{A^2_{\alpha,u_0}\|u_0\|_{L^2}^2}2X_{n-1}^2,\qquad n\geq1.$$
Since obviously $X_n\geq1$ for all $n\in\N,$ we have
$$X_n\leq K X^2_{n-1}\with K:=1+ \frac{A^2_{\alpha,u_0}\|u_0\|_{L^2}^2}2,$$
which  implies that 
$$\forall n\in\N,\;  X_n\leq K^{2^n-1}$$ and thus
$$\wt\L_{2n}^2(t)+\frac12\int_0^t\wt\H_{2n}^2\,d\tau\leq\|u_0\|_{L^2}^2 \biggl(1+ \frac{A^2_{\alpha,u_0}\|u_0\|_{L^2}^2}2\biggr)^{2^n-1}.$$
Clearly, the computations here are  relevant  only if \eqref{eq:smalldata} is not satisfied, so that, up to an 
harmless change of $C_\alpha$ in the definition of $A_{\alpha,u_0},$ we have
$$1+\frac{A_{\alpha,u_0}^2\|u_0\|_{L^2}^2}{2}\leq  A_{\alpha,u_0}^2\|u_0\|_{L^2}^2.$$
In the end, we get a constant  $C_\alpha$  with behavior $\alpha^{-1/2}$ near $0$ such that for all $t\geq0,$
$$\L_{2n}^2(t)+\frac12\int_0^t\H_{2n}^2\,d\tau\leq C_\alpha^{2^n-1}
 \biggl(\|u_0\|_{L^2}^2  \exp\Bigl(\frac{C_0^2\|u_0\|_{L^2}^2}2\Bigr)\biggr)^{2^n}\cdotp$$
 This gives \eqref{est:largedata}. 
 \end{proof}
\begin{remark}\label{r:spacereg}  As a consequence of the regularity theory for the Stokes system, 
in the case where the domain $\Omega$ is smooth with a `reasonable shape' (like e.g. bounded simply 
connected or exterior domains), then one can deduce  decay estimates at any order 
for the \emph{space} derivatives of~$u.$ \end{remark}
 Indeed, we have $u|_{\d\Omega}=0,$ 
$$-\Delta u+\nabla P=-u_t-u\cdot\nabla u \andf \div u=0\quad\hbox{in}\quad\Omega.$$
Hence there exists a constant $C$ depending only on $\Omega$ (if it is e.g uniformly $C^2$ and bounded) such that:
$$
\|\nabla^2 u\|_{L^2}+\|\nabla P\|_{L^2}\leq C\|u_t+u\cdot\nabla u\|_{L^2}.$$
Multiplying by $t$ and using H\"older and Ladyzhenskaya inequality yields
$$
\|t\nabla^2 u\|_{L^2}+\|t\nabla P\|_{L^2}\leq C\|tu_t\|_{L^2} +CC_0\|u\|_{L^2}^{1/2}\|\sqrt t\nabla u\|_{L^2}\|t\nabla^2u\|_{L^2}^{1/2}.
$$
Using Young inequality allows to conclude that for some constant still denoted by $C,$
$$\|t\nabla^2 u\|_{L^2}+\|t\nabla P\|_{L^2}\leq C\bigl(\|tu_t\|_{L^2}+ \|u\|_{L^2}\|\sqrt t\,\nabla u\|_{L^2}^2\bigr)\cdotp$$
This allows to get a uniform bound of the left-hand side in terms of $\|u_0\|_{L^2},$ 
due to \eqref{est:largedata} with $n=1.$
\medbreak
By the same token,  it is easy to bound $\|t^{k+1}\nabla^2 u_t^{(k)}\|_{L^2}$ for any integer $k$ since
 $(u_t^{(k)},P_t^{(k)})$ satisfies $u^{(k)}_t|_{\d\Omega}=0,$ 
$$-\Delta u_t^{(k)}+\nabla P_t^{(k)}=-u_t^{(k+1)}-\sum_{j=0}^k\binom kj u_t^{j}\cdot\nabla u_t^{(k-j)} \andf \div u_t^{(k)}=0\quad\hbox{in}\quad\Omega.$$
Hence,  requiring only $C^2$ regularity for $\Omega$ gives 
$$\displaylines{\|t^{k+1}\nabla^2 u_t^{(k)}\|_{L^2}+\|t^{k+1}\nabla P_t^{(k)}\|_{L^2}\leq 
C\biggl(\|t^{k+1}u_t^{(k+1)}\|_{L^2}\hfill\cr\hfill+\sum_{j=0}^k\binom kj \|(t^{j+\frac14}u_t^{j})\cdot
(t^{k-j+\frac34}\nabla u_t^{(k-j)})\|_{L^2}\biggr)\cdotp}$$
The right-hand side may be bounded in terms of $\|u_0\|_{L^2}$ 
 by combining H\"older inequality, \eqref{eq:lad} and   \eqref{est:largedata} with $n=k+1.$  
\smallbreak
In order to bound higher order space derivatives, we use that if $\Omega$ is smooth then, for all $j\in\N,$  there exists a constant $C_j$ depending only on $\Omega$ and $j$ such that
$$\|\nabla^{j+2}u\|_{L^2} + \|\nabla^{j+1}P\|_{L^2} \leq C_j\bigl(\|\nabla^ju_t\|_{L^2}+\|\nabla^j(u\cdot\nabla u)\|_{L^2}\bigr)\cdotp$$
Similar inequalities at any order may be written  for $\nabla^{j+2}u_{t}^k.$
Then using a careful induction argument allows to bound 
$t^{k+j/2}\|\nabla^j u_t^{(k)}\|_{L^2}$ in terms of $u_0$ at any order.
The (tedious) verifications are left to the reader.

%%%%%%%%%%%%%%%%%%%%%%%%%%%%%%%%%%%%%%%

\section{Small time Gevrey regularity  in the case  of large data}

In this section we address the question of Gevrey regularity in the case where $u_0$ is large. 
Since the solution $(\ell,Q)$ to the Stokes system \eqref{eq:stokes} 
has analytic regularity (recall \eqref{est:stokes}), 
it suffices to study the regularity of  the fluctuation $f:= u-\ell$ that, by definition,  
 satisfies $f|_{\d\Omega}=0,$  $f|_{t=0}=0,$ and, for some scalar function $R,$ 
\begin{equation}\label{eq:fluctuation} \left\{\begin{aligned} &f_t-\Delta f+\nabla R=-u\cdot\nabla u &\hbox{in }\  \R_+\times\Omega,\\
&\div f =0&\hbox{in }\  \R_+\times\Omega.\end{aligned}\right.\end{equation}
The main result of this section reads:
%$$
%\left\{\begin{aligned} &\ell_t-\Delta\ell +\nabla Q=0 &\hbox{in }\  \R_+\times\Omega,\\
%&\div\ell =0&\hbox{in }\  \R_+\times\Omega\end{aligned}\right.$$
%supplemented  with initial data $u_0$ and homogeneous Dirichlet boundary condition.
\begin{thm}\label{thm:3} Let $\alpha>0.$ There exists a positive constant $C_\alpha,$ a positive time $T_{\alpha,u_0}$
 and a continuous increasing function $\phi_{\alpha,u_0}:[0,T_{\alpha,u_0}]\to\R_+$ vanishing at $0$  such that
the fluctuation $f$ satisfies for all $t\in[0,T_{\alpha,u_0}]$:
$$\displaylines{\sum_{k=0}^{\infty} \biggl(\frac{t^{2k}}{2^{2k}(k!)^{2+\alpha}} \|f_t^{(k)}(t)\|_{L^2}^2+
\frac{t^{2k+1}}{2^{2k+1} k! ((k+1)!)^{1+\alpha}} \|\nabla f_t^{(k)}(t)\|_{L^2}^2\biggr)\hfill\cr\hfill+
\sum_{k=0}^\infty \int_0^t\biggl(\frac{\tau^{2k}}{2^{2k+1}(k!)^{2+\alpha}}\|\nabla f_\tau^{(k)}(\tau)\|_{L^2}^2
+\frac{\tau^{2k+1}}{2^{2k+1} k! ((k+1)!)^{1+\alpha}} \|f_\tau^{(k+1)}(\tau)\|_{L^2}^2\biggr)d\tau\leq \phi_{\alpha,u_0}(t).}$$
\end{thm}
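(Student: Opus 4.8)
The plan is to mimic the large-data argument of Theorem~\ref{thm:2}, but applied to the fluctuation system \eqref{eq:fluctuation} rather than to $(NS)$ directly, exploiting that $f$ vanishes at $t=0$ together with all its time derivatives (this is what produces a function $\phi_{\alpha,u_0}$ that vanishes at $0$, rather than a constant). First I would set up the analogue of the functionals in \eqref{def:LH} for $f$, say $\cL^f_m$ and $\cH^f_m$, and derive the energy identities: differentiating \eqref{eq:fluctuation} $k$ times in time, multiplying by the appropriate power of $t$, and taking the $L^2$ scalar product, exactly as in the passage leading to \eqref{eq:2k-1} and \eqref{eq:2k}. The only structural difference is that the nonlinear source is $-u\cdot\nabla u=-(\ell+f)\cdot\nabla(\ell+f)$, so the right-hand side contains, besides the genuinely quadratic-in-$f$ terms (handled precisely as in Section~2 via H\"older and Ladyzhenskaya \eqref{eq:lad}), cross terms of the form $\ell\cdot\nabla f$ and $f\cdot\nabla\ell$ and the purely inhomogeneous term $\ell\cdot\nabla\ell$.

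Next I would carry out the same two-step renormalization $\cL^f_m\mapsto\wt L^f_m\mapsto L^f_m$ with weights $2^{-k}/k!$ and $c_j=(j!)^\alpha$ as in \eqref{eq:1ren}--\eqref{eq:Lm}, and bound the binomial sums using the convolution/Young inequalities exactly as in \eqref{eq:LHk}--\eqref{eq:LL2n}. The $\ell$-cross terms get absorbed into a factor of the form $\Psi(t):=\sup_{k}\,(\text{renormalized norms of }\ell_t^{(k)})$, which is finite and, crucially, need not be small because we will only integrate over a short time interval. After summing from $k=1$ to $n$ and introducing $\L^f_{2n}$, $\H^f_{2n}$, the differential inequality should read schematically
$$\frac{d}{dt}(\L^f_{2n})^2+\tfrac12(\H^f_{2n})^2\leq C\,\Psi^2(t)\,(\L^f_{2n})^2+C\,\Psi^2(t)+ C_\alpha\,\L^f_{2n-2}\H^f_{2n-2}\H^f_{2n},$$
with $\L^f_{2n}(0)=0$. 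Integrating and using $\L^f_{2n-2}\H^f_{2n-2}\leq\tfrac12((\L^f_{2n-2})^2+(\H^f_{2n-2})^2)$ gives a recursion of the form $Y_n(t)\leq \psi(t)+\psi(t)\,Y_{n-1}(t)^2$ where $Y_n(t):=\sup_{[0,t]}(\L^f_{2n})^2+\tfrac12\int_0^t(\H^f_{2n})^2$ and $\psi$ is continuous, increasing, and vanishes at $0$ (it collects $\int_0^t\Psi^2$ and the contribution of the inhomogeneous $\ell\cdot\nabla\ell$ term, both of which vanish as $t\to0$ because $f$ and its derivatives start from zero and $\ell\cdot\nabla\ell$ is integrable near $0$). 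Choosing $T_{\alpha,u_0}$ so small that $4\psi(T_{\alpha,u_0})<1$, a continuity/bootstrap argument gives a uniform bound $Y_n(t)\leq\phi_{\alpha,u_0}(t)$ for all $n$ with $\phi_{\alpha,u_0}$ increasing and vanishing at $0$; the monotone convergence theorem then yields the stated infinite-sum estimate for $f$.

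The main obstacle I anticipate is twofold. First, one must verify that $\psi$ genuinely vanishes at $0$: this requires a quantitative statement that $\ell$ (and each renormalized time derivative of $\ell$) is controlled on $[0,T]$ with a bound going to zero as $T\to0$ in the relevant space-time norm, which follows from \eqref{est:stokes} but needs to be extracted carefully — in particular the inhomogeneous term $\int_0^t\|\ell\cdot\nabla\ell\|$-type quantities must be shown to be $o(1)$, using that $\ell\in L^2_{loc}(\R_+;H^1)$ and Ladyzhenskaya to get $\ell\cdot\nabla\ell\in L^1_{loc}$. Second, the cross terms $f\cdot\nabla\ell$ require an estimate of $\|t^j\nabla\ell^{(j)}_t\|_{L^4}$, i.e. of $\cH$-type quantities for $\ell$, which again are available from \eqref{est:stokes} but must be threaded through the same double renormalization so that the binomial weights $\binom kj^{-\alpha}$ still produce a convergent convolution. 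Once the bookkeeping of these $\ell$-contributions is organized into a single scalar function $\psi(t)\to 0$, the recursion and bootstrap are identical in spirit to Sections~1 and~2, so I would present those steps briefly and concentrate the write-up on the new inhomogeneous and cross terms.
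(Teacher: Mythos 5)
Your overall architecture coincides with the paper's: the same decomposition $u=\ell+f$ with $f(0)=0$, the same functionals and double renormalization, and the same use of the Stokes bound \eqref{est:stokes} together with dominated convergence to produce a short time $T_{\alpha,u_0}$ and a bound vanishing at $t=0$. Where you genuinely diverge is the closing mechanism for the quadratic-in-$f$ terms: you import the large-data recursion of Theorem \ref{thm:2}, whereas the paper uses no recursion in $n$ at all. It keeps the pure-$f$ contribution at level $2n$, bounds it by $2C_0C_\alpha\,\L^f_{2n}(\H^f_{2n})^2$, and absorbs it into the left-hand side under the bootstrap hypothesis $8C_0C_\alpha\L^f_{2n}(t)\le 1/4$, which propagates uniformly in $n$ on $[0,T_0]$ precisely because $\L^f_{2n}(0)=0$ and the resulting bound $64C_0^2C_\alpha^2\|u_0\|_{L^2}^2\int_0^t(\H^\ell_{2n})^2\,d\tau$ stays below the threshold there; this integral is then the function $\phi_{\alpha,u_0}$. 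What the paper's route buys is simplicity (no induction on $n$, no fixed-point analysis of a quadratic recursion) and constants that do not degenerate with $n$.

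Your route can be made to work, but two points need fixing. First, the coefficient of $Y_{n-1}^2$ in your recursion is an absolute constant coming from Young's inequality applied to $\int_0^t\L^f_{2n-2}\H^f_{2n-2}\H^f_{2n}\,d\tau$, not a function $\psi(t)$ vanishing at $0$; the corrected recursion $Y_n\le\psi(t)+C\,Y_{n-1}^2$ still closes uniformly in $n$ once $4C\psi(T)<1$ and $Y_0\le\psi$ (giving $Y_n\le 2\psi$), so this is a bookkeeping slip rather than a fatal gap, but as written the inequality is not what the estimates produce. Second, the base case $n=0$ already forces you to run a continuity/bootstrap argument exploiting $f(0)=0$ to absorb the cubic-in-$f$ term, and once one observes that this very absorption works uniformly in $n$, the recursion becomes superfluous — which is exactly the paper's shortcut. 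Finally, your anticipated obstacle concerning $\|t^j\nabla\ell_t^{(j)}\|_{L^4}$ does not arise: since the source is $\div(u\otimes u)$, one integrates by parts onto $f_t^{(k)}$, so only $L^4$ norms of $\ell_t^{(j)}$ and $f_t^{(j)}$ (no gradients) are needed, and Ladyzhenskaya \eqref{eq:lad} converts these into the $L$ and $H$ functionals already controlled by \eqref{est:stokes}.
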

\begin{proof} 
Denote by $L_m^\ell$ and $H_m^\ell$ (resp. $L_m^f$ and $H_m^f$) the quantities $L_m$ and $H_m$
defined in \eqref{eq:Lm}  pertaining to $\ell$ (resp. $f$). According to Leibniz rule, we have  for all $k\in\N,$  
$$f_t^{(k)} -\Delta f_t^{(k-1)} + \nabla R_t^{k-1}= - \sum_{j=0}^{k-1} \binom{k-1}{j} u_t^{(j)}\cdot\nabla u_t^{(k-1-j)}.$$
Hence taking the scalar product with $t^{2k-1}f_t^{(k)}$ (odd case) 
or with $t^{2k} f_t^{(k)}$ (even case) and using the  following type of inequalities: 
$$\displaylines{\|t^j u_t^{(j)}\|_{L^4} \|t^{k-j} u_t^{(k-j)}\|_{L^4} \leq
\|t^j \ell_t^{(j)}\|_{L^4} \|t^{k-j} \ell_t^{(k-j)}\|_{L^4} +
\|t^j \ell_t^{(j)}\|_{L^4} \|t^{k-j} f_t^{(k-j)}\|_{L^4} \hfill\cr\hfill
+\|t^j f_t^{(j)}\|_{L^4} \|t^{k-j} \ell_t^{(k-j)}\|_{L^4} 
+\|t^j f_t^{(j)}\|_{L^4} \|t^{k-j} f_t^{(k-j)}\|_{L^4}} $$
which implies, thanks to \eqref{eq:lad} that 
$$\displaylines{\|t^j u_t^{(j)}\|_{L^4} \|t^{k-j} u_t^{(k-j)}\|_{L^4} \leq C_0\Bigl(\sqrt{L^\ell_{2j}H^\ell_{2j}L^\ell_{2k-2j}H^\ell_{2k-2j}}
+\sqrt{L^\ell_{2j}H^\ell_{2j}L^f_{2k-2j}H^f_{2k-2j}}\hfill\cr\hfill
+\sqrt{L^f_{2j}H^f_{2j}L^\ell_{2k-2j}H^\ell_{2k-2j}}
+\sqrt{L^f_{2j}H^f_{2j}L^f_{2k-2j}H^f_{2k-2j}}\Bigr),}$$
 the counterpart of \eqref{eq:LL2n} now reads
 $$\displaylines{\frac12\frac d{dt}(\L_{2n}^f)^2+\frac12(\H_{2n}^f)^2 +\frac12(H_{2n}^f)^2\leq
2C_0\|(2^{-j\alpha} L_{2j}^f H_{2j}^f)\|_{\ell_n^{2/3}}\H_{2n}^f\hfill\cr\hfill
+4C_0\sqrt{\|(2^{-j\alpha} L_{2j}^\ell H_{2j}^\ell)\|_{\ell_n^{2/3}}\|(2^{-j\alpha} L_{2j}^f H_{2j}^f)\|_{\ell_n^{2/3}}}\;\H_{2n}^f+
2C_0\|(2^{-j\alpha} L_{2j}^\ell H_{2j}^\ell)\|_{\ell_n^{2/3}}\H_{2n}^f}$$
with $$ \mathbb L_m^p:=\sqrt{\sum_{k=0}^{m}  (L_{k}^p)^2}\andf  \mathbb H_m^p:=\sqrt{\sum_{k=0}^{m} (H_{k}^p)^2}
\quad\hbox{for }\ p\in\{f,\ell\}.$$
Using the Young inequality to bound the right-hand side, this inequality implies that
 $$\begin{aligned}\frac d{dt}(\L_{2n}^f)^2+(\H_{2n}^f)^2 +(H_{2n}^f)^2&\leq
8C_0\|(2^{-j\alpha} L_{2j}^f H_{2j}^f)\|_{\ell_n^{2/3}}\H_{2n}^f
+8C_0\|(2^{-j\alpha} L_{2j}^\ell H_{2j}^\ell)\|_{\ell_n^{2/3}}\H_{2n}^f\\
&\leq 8C_0C_\alpha \L_{2n}^f(\H_{2n}^f)^2 +  8C_0C_\alpha \L_{2n}^\ell\H_{2n}^\ell\H_{2n}^f\\
&\leq\Bigl(\frac14+8C_0C_\alpha \L_{2n}^f\Bigr)(\H_{2n}^f)^2+64 C_0^2C_\alpha^2(\L_{2n}^\ell \H_{2n}^\ell)^2.
\end{aligned}$$
Therefore, whenever 
\begin{equation}\label{eq:condition}8C_0C_\alpha\L_{2n}^f(t)\leq1/4,\end{equation} we have
$$(\L_{2n}^f(t))^2+\frac12\int_0^t(\H_{2n}^f)^2d\tau \leq 64 C_0^2C_\alpha^2
\int_0^t\bigl(\L^\ell_{2n}\H^\ell_{2n}\bigr)^2d\tau
\leq 64 C_0^2C_\alpha^2\|u_0\|_{L^2}^2\int_0^t\bigl(\H^\ell_{2n}\bigr)^2d\tau.$$
Since \eqref{est:stokes} guarantees that 
$$\int_0^\infty \sum_{k=0}^\infty(H_k^\ell)^2\,dt<\infty,$$
Lebesgue dominated convergence theorem ensures that there exists $T_0>0$ such that 
$$
8C_0C_\alpha\|u_0\|_{L^2}\sqrt{\int_0^{T_0} \sum_{k=0}^\infty(H_k^\ell)^2\,dt}<\frac1{32C_0C_\alpha}\cdotp$$
Reverting to the above inequality and bootstrapping, one can now conclude that \eqref{eq:condition} is satisfied on $[0,T_0]$ for 
all $n\in\N,$ and that we thus have for all $t\in[0,T_0],$ 
$$\sum_{k=0}^{\infty}  (L_{k}^f(t))^2+\frac12\int_0^t \sum_{k=0}^{\infty} \bigl(H_{k}^f\bigr)^2d\tau
\leq 64 C_0^2C_\alpha^2\|u_0\|_{L^2}^2\int_0^t\sum_{k=0}^{\infty}  (H_{2k}^\ell)^2d\tau.$$
As the right-hand side is a continuous nondecreasing function vanishing at zero, this completes the proof.
\end{proof}

%%%%%%%%%%%%%%%%%%%%%%%%%%%%%%%%%%%%%%%

\section{Faster decay} 

In this last section,  we assume that there exist $K \geq 0$ and $\gamma> 0$ such that our
reference solution satisfies
\begin{equation}\label{eq:faster}
\|u(t)\|_{L^2} \leq Kt^{-\gamma},\qquad t>0.\end{equation}
It is known that \eqref{eq:faster} holds true with  $\gamma=1/2$ if $u_0$ is in $L^1$
(see \cite{KM}). 
  Fix some  $\alpha>0$ and set for all 
$k\in\N$ and $t\geq t_0\geq0,$
$$
\begin{aligned}
L_{2k}(t,t_0):=\frac{\|(t-t_0)^ku_t^{(k)}(t)\|_{L^2}}{2^k(k!)^{1+\alpha}}
&\andf&H_{2k}(t,t_0):=\frac{\|(t-t_0)^k\nabla u_t^{(k)}(t)\|_{L^2}}{2^k(k!)^{1+\alpha}},\\
L_{2k+1}(t,t_0):=\frac{\|(t-t_0)^{k+\frac12}\nabla u_t^{(k)}(t)\|_{L^2}}{2^k\sqrt{k!(k\!+\!1)}\,((k\!+\!1)!)^{\alpha}}
&\andf&H_{2k+1}(t,t_0):=\frac{\|(t-t_0)^{k+\frac12}u_t^{(k+1)}(t)\|_{L^2}}{2^k\sqrt{k!(k\!+\!1)!}\,((k\!+\!1)!)^{\alpha}}\cdotp
\end{aligned}
$$
Then,  repeating the computations leading to \eqref{est:smalldata}, we arrive at
$$ \L_{2n}^2(t,t_0) + \frac12\int_0^t\H_{2n}^2(\tau,t_0)\,d\tau\leq\|u(t_0)\|_{L^2}^2$$
with 
$$\L_{2n}^2(t,t_0):=\sum_{k=0}^{2n}L^2_k(t,t_0)\andf\H_{2n}^2:=\sum_{k=0}^{2n}H_k^2(t,t_0)
$$
whenever $8C_0C_\alpha \|u(t_0)\|_{L^2} \leq 1.$ 
\medbreak
Clearly, this latter condition is satisfied for any $t_0 \geq 0$ if $8C_0C_\alpha \|u_0\|_{L^2}\leq 1,$ or, due to \eqref{eq:faster}, 
 at $t_0 = t/2$
if $t \geq 2(8C_0C_\alpha K)^{1/\gamma}$  in the general case. Consequently, we have proved the following statement:
\begin{thm}\label{thm:4}
Let $\alpha>0.$ Assume that the considered finite energy global solution $u$ to (NS) satisfies \eqref{eq:faster}. 
Then there exists $t_0\geq0$ such that for all $t\geq t_0$ we have,
\begin{multline}
\sum_{k=0}^\infty\biggl(\frac{t^{2k+2\gamma}}{2^{4k}(k!)^{2+\alpha}}\|u_t^{(k)}(t)\|_{L^2}^2 + \frac{t^{2k+1+2\gamma}}{2^{4k+1}k!
((k\!+\!1)!)^{1+\alpha}}
\|\nabla u_t^{(k)}(t)\|_{L^2}^2\biggr)\\+
\sum_{k=0}^\infty\int_0^t\biggl(\frac{\tau^{2k+2\gamma}}{2^{4k}(k!)^{2+\alpha}}\|\nabla u_\tau^{(k)}(\tau)\|_{L^2}^2 + \frac{\tau^{2k+1+2\gamma}}{2^{4k+1}k!((k\!+\!1)!)^{1+\alpha}}
\|u_\tau^{(k+1)}(\tau)\|_{L^2}^2\biggr)d\tau\leq 2^{2\gamma}K^2.\end{multline}
\end{thm}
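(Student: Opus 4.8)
The plan is to run the small-data argument of Section~1 from a translated initial time $t_0$ rather than from $0$. The point of the shifted functionals $L_m(t,t_0), H_m(t,t_0)$ is precisely this: the same differential inequalities and the same succession of renormalizations that produced \eqref{est:smalldata} go through verbatim when powers of $t$ are replaced by powers of $(t-t_0)$, because the only place $t$ appears explicitly is through the time derivative of $t^k$, which becomes $\d_t(t-t_0)^k=k(t-t_0)^{k-1}$. Hence, as indicated in the excerpt, under the smallness condition $8C_0C_\alpha\|u(t_0)\|_{L^2}\leq1$ we have for every $n$ and every $t\geq t_0$
\begin{equation*}
\L_{2n}^2(t,t_0)+\frac12\int_{t_0}^t\H_{2n}^2(\tau,t_0)\,d\tau\leq\|u(t_0)\|_{L^2}^2.
\end{equation*}

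Next I would choose $t_0$ to make the smallness condition automatic while retaining a power of $t$ in the estimate. If $8C_0C_\alpha\|u_0\|_{L^2}\leq1$ we may simply take $t_0=0$. Otherwise we invoke the hypothesis \eqref{eq:faster}: choosing $t_0=t/2$ we have $\|u(t_0)\|_{L^2}\leq K(t/2)^{-\gamma}$, so the smallness condition $8C_0C_\alpha\|u(t/2)\|_{L^2}\leq1$ holds as soon as $t\geq 2(8C_0C_\alpha K)^{1/\gamma}=:2t_\ast$. Setting $t_0:=t_\ast$ in the statement (or $t_0=0$ in the small-data case), we obtain for all $t\geq t_0$ the bound with $\|u(t/2)\|_{L^2}^2\leq K^2 (t/2)^{-2\gamma}=2^{2\gamma}K^2 t^{-2\gamma}$ on the right-hand side.

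It then remains to substitute $t_0=t/2$ into the left-hand side and absorb the resulting powers of $2$. On $[t/2,t]$ one has $(t-t_0)=t/2$, so each term of the form $(t-t_0)^{2k}/(2^{2k}(k!)^{2+\alpha})\|u_t^{(k)}(t)\|_{L^2}^2$ equals $t^{2k}/(2^{4k}(k!)^{2+\alpha})\|u_t^{(k)}(t)\|_{L^2}^2$, which is exactly the summand appearing in Theorem~\ref{thm:4}; the factors $2^{4k}$ and $2^{4k+1}$ there are just $2^{2k}\cdot 2^{2k}$ and $2^{2k+1}\cdot 2^{2k}$. The time-integral terms are handled the same way: $\int_{t_0}^t\H_{2n}^2(\tau,t_0)\,d\tau$ dominates $\int_{t_0}^t(\tau-t_0)^{2k}/(2^{2k}(k!)^{2+\alpha})\|\nabla u_\tau^{(k)}\|_{L^2}^2\,d\tau$, and replacing the lower limit $t_0$ by $0$ only enlarges the domain of integration (all integrands are nonnegative), while on $[t_0,\tau]$ we again have $(\tau-t_0)^{2k}\geq$ the appropriate fraction of $\tau^{2k}$ after adjusting constants — more cleanly, one simply keeps the integral from $t_0$ and notes the claimed inequality with integral over $[0,t]$ follows a fortiori after bounding $(\tau-t_0)^{2k}$ below by $(\tau/2)^{2k}$ on the relevant range. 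Finally letting $n\to\infty$ and applying the monotone convergence theorem turns the finite sums into the infinite series of the statement, with the uniform bound $2^{2\gamma}K^2$. The only genuine point requiring care — and the main (mild) obstacle — is checking that the renormalization chain of Section~1 is truly insensitive to the shift $t_0$, i.e. that no term involving a derivative of the weight introduces an unwanted $t_0$-dependence; since $\d_t(t-t_0)^k=k(t-t_0)^{k-1}$ has the same structure as $\d_t t^k=kt^{k-1}$, this is immediate, but it is worth stating explicitly.
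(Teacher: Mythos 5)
Your overall strategy is exactly the paper's: rerun the small-data scheme with the shifted weights $(t-t_0)^k$ (the only structural change being $\partial_t(t-t_0)^k=k(t-t_0)^{k-1}$), obtain $\L_{2n}^2(t,t_0)+\tfrac12\int_{t_0}^t\H_{2n}^2(\tau,t_0)\,d\tau\le\|u(t_0)\|_{L^2}^2$ under the smallness condition $8C_0C_\alpha\|u(t_0)\|_{L^2}\le1$, and then take $t_0=t/2$, which is admissible by \eqref{eq:faster} once $t\ge2(8C_0C_\alpha K)^{1/\gamma}$. For the non-integrated terms your conclusion is complete and coincides with the paper's: at time $t$ one has $t-t_0=t/2$ exactly, which produces the $2^{4k}$ and $2^{4k+1}$ denominators, and $\|u(t/2)\|_{L^2}^2\le2^{2\gamma}K^2t^{-2\gamma}$ yields the extra weight $t^{2\gamma}$ and the constant $2^{2\gamma}K^2$. (Minor slip: with your notation the threshold in the statement should be $2t_\ast$, not $t_\ast$.)

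Your treatment of the time-integrated terms, however, rests on a false inequality: you claim $(\tau-t/2)^{2k}\ge(\tau/2)^{2k}$ ``on the relevant range'' $[t/2,t]$, but this holds only at $\tau=t$; near $\tau=t/2$ the left-hand side vanishes while the right-hand side is of size $(t/4)^{2k}$. What the shifted estimate actually provides is
\begin{equation*}
\sum_{k}\int_{t/2}^t\frac{(\tau-t/2)^{2k}}{2^{2k}(k!)^{2+\alpha}}\|\nabla u_\tau^{(k)}(\tau)\|_{L^2}^2\,d\tau\le\|u(t/2)\|_{L^2}^2,
\end{equation*}
and there is no pointwise comparison converting the weight $(\tau-t/2)^{2k}$ into a fixed multiple of $\tau^{2k}$ on all of $[t/2,t]$, nor any way to extend the integration down to $0$. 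This is precisely the point on which the paper's own proof is silent (it passes directly from the displayed shifted inequality to the theorem), and in fact the integral part of the statement taken literally (weight $\tau^{2k+2\gamma}$, integral over $[0,t]$, bound uniform in $t$) cannot be reached by this substitution at all: already for $k=0$, a solution with heat-like decay $\|u(\tau)\|_{L^2}^2\sim\tau^{-2\gamma}$ and $\|\nabla u(\tau)\|_{L^2}^2\sim\tau^{-1-2\gamma}$ makes $\int_1^t\tau^{2\gamma}\|\nabla u\|_{L^2}^2\,d\tau$ grow like $\log t$. So your pointwise part stands, but for the integrated quantities the honest conclusion of this method keeps the weights $(\tau-t/2)^{2k}$ and the domain $[t/2,t]$ (equivalently, the statement of the integral terms should be reformulated), rather than being absorbed by the inequality you invoke.
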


%%%%%%%%%%%%%%%%%%%%%%%%%%%%%%%%%%%%%%%%%%%%%%%%%

 \begin{small}

\end{small}

%\bigbreak

%\noindent\textsc{Univ Paris Est Creteil, Univ Gustave Eiffel, CNRS, LAMA UMR8050, F-94010 Creteil, France
%and Sorbonne Universit\'e, LJLL UMR 7598, 4 Place Jussieu, 75005 Paris}\par\nopagebreak
%E-mail address: danchin@u-pec.fr

\end{document}